\newtheorem{theorem}{Theorem}
\newtheorem{lemma}{Lemma}
\theoremstyle{definition}
\newtheorem{definition}{Definition}[section]
\newtheorem{example}{Example}[section]
\newtheorem{remark}{Remark}[section]
\renewcommand{\epsilon}{\varepsilon}
\def\Id{\text{\rm Id}}
\def\cA{\EuScript{A}}
\def\N{\mathbb{N}}
\def\Z{\mathbb{Z}}
\def\R{\mathbb{R}}
\begin{document}
\title[A generalized Grobman-Hartman]{A generalized Grobman-Hartman theorem for nonautonomous dynamics}

\author{Lucas Backes}
\address{\noindent Departamento de Matem\'atica, Universidade Federal do Rio Grande do Sul, Av. Bento Gon\c{c}alves 9500, CEP 91509-900, Porto Alegre, RS, Brazil.}
\email{lucas.backes@ufrgs.br} 

\author{Davor Dragi\v cevi\' c}
\address{Department of Mathematics, University of Rijeka, Croatia}
\email{ddragicevic@math.uniri.hr}

\begin{abstract}
The purpose of this note is to extend the recent generalized version of the Grobman-Hartman theorem established by Bernardes Jr. and  Messaoudi from  an autonomous to nonautonomous dynamics. More precisely, we prove that any sufficiently small perturbation of a nonautonomous linear dynamics 
that admits a generalized exponential dichotomy is topologically conjugated to its linear part. In addition, we prove that under certain mild additional conditions, the conjugacy is in fact H\"older continuous.
\end{abstract}

\keywords{generalized exponential dichotomy, linearization, H\"older conjugacy}
\subjclass[2010]{34D30, 34D09}
\maketitle

\section{Introduction}
The classical Grobman-Hartman theorem~\cite{G1,G2, H1,H2} is one of the most celebrated results in the qualitative theory of differential equations and dynamical systems. It asserts that for a hyperbolic linear automorphism $A$ on $\R^d$ (i.e. $A$ is an invertible operator whose spectrum doesn't intersect the unit circle in $\mathbb C$) and 
for any bounded Lipschitz map $f\colon \R^d \to \R^d$ whose Lipschitz constant is sufficiently small, there exists a homeomorphism $h\colon \R^d \to \R^d$ such that $h\circ A=(A+f)\circ h$. This result was extended to Banach spaces independently by Palis~\cite{Palis} and Pugh~\cite{Pugh} 
(who also simplified the original arguments of Grobman and Hartman). It is well-known that the conjugacy $h$ is in general only H\"older continuous. Indeed, although apparently this fact was known to experts for some time, the first rigorous proof was published by Shi and Xiong~\cite{SX}.
Nevertheless, many works were devoted to the problem of formulating sufficient conditions which would ensure that the conjugacy $h$ exhibits higher regularity. In this direction, we mention the seminal works of Sternberg~\cite{Stern} and Belitskii~\cite{Bel73,Bel78}
as well as some more recent contributions~\cite{E1,E2, R-S-JDDE04, R-S-JDDE06, ZhangZhang14JDE,ZZJ}.

We stress that all the above mentioned works deal with the case when $A$ is a hyperbolic operator. Recently, Bernardes Jr. and  Messaoudi~\cite{B} showed that the conclusion of the Grobman-Hartman theorem holds true under weaker assumption that $A$ is a \emph{generalized hyperbolic operator}. This weaker notion of 
hyperbolicity was introduced and studied by Cirilo et.al. in~\cite{C}. As in the classical notion of hyperbolicity, the notion of generalized hyperbolicity requires that the domain of $A$ splits into two closed subspaces, one of which is contracting while the other is expanding under the action of $A$. However, unlike what happens in the hyperbolic case, these
 subspaces don't need to be invariant with respect to $A$ (see Example~\ref{ex: ghyp op} for details).

We emphasize that so far we discussed only the case of autonomous dynamics. The first version of the Grobman-Hartman theorem for nonautonomous dynamics with continuous time was established by Palmer~\cite{Palmer}. The case of nonautonomous dynamics with discrete time was first considered by
Aulbach and Wanner~\cite{AW}. Since then many authors have obtained valuable contribution to nonautonomous linearization (see for example~\cite{Jiang, Lin, LFP, RS, SX} and references therein). We particularly mention the recent results~\cite{CR,CDS, DZZ, DZZ2} dealing with higher regularity of the conjugacies (which as in the autonomous case are in general only
H\"older continuous).

The main objective of the present paper is to obtain a nonautonomous version of the generalized Grobman-Hartman theorem established in~\cite{B}. More precisely, we introduce the notion of a generalized exponential dichotomy which extends the classical notion of exponential dichotomy and in addition, when restricted 
to the autonomous case coincides with the notion of a generalized hyperbolic operator. We then prove that any sufficiently small nonlinear perturbation of a linear dynamics that admits a generalized exponential dichotomy is topologically conjugated to the linear part. In addition, we prove that  conjugacies are H\"older continuous. We emphasize that our results (and their proofs) are inspired by those in~\cite{B}.

The paper is organized as follows. In Section~\ref{P} we introduce the notion of a generalized exponential dichotomy and make several important remarks related to it. In Section~\ref{M1} we established the main result of our paper. Namely, we prove the above mentioned generalized version of the Grobman-Hartman theorem for nonautonomous dynamics. Then, in Section~\ref{M2} we prove that the conjugacies are H\"older continuous under certain additional assumptions. Finally, in Section~\ref{sec: examples} we construct some explicit examples of nonautonomous dynamics that admits a generalized exponential dichotomy.

\section{Preliminaries}\label{P}
Let $X=(X, \lVert \cdot \rVert)$ be an arbitrary Banach space and denote by $\mathcal B(X)$ the space of all bounded linear operators on $X$.  Given a sequence $(A_n)_{n\in \Z}$ of invertible operators in $\mathcal B(X)$, we define the associated \emph{linear cocycle} by
\[
\cA(m, n)=\begin{cases}
A_{m-1}\cdots A_n & \text{if $m>n$;}\\
\Id & \text{if $m=n$;} \\
A_m^{-1}\cdots A_{n-1}^{-1} & \text{if $m<n$.}
\end{cases}
\]

\subsection{Generalized exponential dichotomy} We now introduce the main concept that we are going to consider in this paper. Namely, we introduce the notion of a generalized exponential dichotomy.  This notion  is a generalization of the notion of generalized hyperbolic operator (see Example \ref{ex: ghyp op}) introduced in \cite{C} (and further studied in~\cite{B}) to the nonautonomous setting.
\begin{definition}\label{def: gexp}
Let $(A_n)_{n\in \Z}$ be a sequence of invertible operators in $\mathcal B(X)$. We say that $(A_n)_{n\in \Z}$ admits a \emph{generalized exponential dichotomy} if:
\begin{itemize}
\item for each $n\in \Z$ there are closed subspaces $S(n)$ and $U(n)$ of $X$ such that
\begin{equation}\label{split}
X=S(n)\oplus U(n) \quad \text{for $n\in \Z$;}
\end{equation}
\item for each $n\in \Z$,
\begin{equation}\label{inv}
A_n S(n)\subset S(n+1) \quad \text{and} \quad A_n^{-1} U(n+1)\subset U(n);
\end{equation}
\item there exist $D, \lambda >0$ such that 
\begin{equation}\label{e1}
\lVert \cA(m, n)x\rVert \le De^{-\lambda (m-n)}\lVert x\rVert  \quad \text{for $x\in S(n)$ and $m\ge n$,}
\end{equation}
and
\begin{equation}\label{e2}
\lVert \cA(m, n)x\rVert \le De^{-\lambda (n-m)}\lVert x\rVert \quad \text{for $x\in U(n)$ and $m\le n$;}
\end{equation}
\item we have that
\begin{equation}\label{bp}
\sup_{n\in \Z} \lVert P_n\rVert <\infty, 
\end{equation}
where $P_n \colon X\to S(n)$ is a projection associated with the decomposition~\eqref{split}.
\end{itemize}
\end{definition}

Let us make some observations about this definition. 
\begin{remark}
We observe that it follows easily from~\eqref{e1}, \eqref{e2} and~\eqref{bp} that, by increasing $D$ if necessary, we have 
\begin{equation}\label{e11}
\lVert \cA(m, n)P_n\rVert \le De^{-\lambda (m-n)} \quad \text{for $m\ge n$,}
\end{equation}
and
\begin{equation}\label{e22}
\lVert \cA(m, n)(\Id-P_n)\rVert \le De^{-\lambda (n-m)} \quad \text{for $m\le n$.}
\end{equation}
We are going to use this simple observation  in the sequel.
\end{remark}

\begin{remark}
The notion of a generalized exponential dichotomy is similar to the classical notion of an exponential dichotomy (see~\cite{Coppel, H}). The important difference is that in the notion of an exponential dichotomy it is required that
\[
A_n S(n)=S(n+1) \quad \text{and} \quad A_n^{-1} U(n+1)= U(n),
\]
which is obviously a stronger requirement than~\eqref{inv}.  
We refer to Section \ref{sec: examples} for several examples of nonautonomous dynamics that admits a generalized exponential dichotomy but doesn't admit an exponential dichotomy. 
\end{remark}

\begin{remark}
In the case when $X$ is finite-dimensional, the notion of a generalized exponential dichotomy reduces to the notion of \emph{exponential trichotomy} introduced in~\cite{EH,P} (see also~\cite{BD}). Indeed, let $(y_n)_{n\in \Z}\subset X$ be such that 
$\sup_{n\in \Z}\lVert y_n\rVert <\infty$. Then, it follows from~\eqref{e11} and~\eqref{e22} that the sequence $(x_n)_{n\in \Z}\subset X$ defined by
\[
x_n=\sum_{k=-\infty}^n \cA(n, k)P_k y_k -\sum_{k=n+1}^\infty \cA(n, k)(\Id-P_k)y_k \quad n\in \Z,
\]
satisfies $\sup_{n\in \Z}\lVert x_n\rVert<\infty$. In addition, it is easy to verify that
\[
x_{n+1}-A_n x_n=y_{n+1}, \quad \text{for $n\in \Z$.}
\]
It follows now from~\cite[Proposition 1.]{P} that $(A_n)_{n\in \Z}$ admits an exponential trichotomy. 
\end{remark}

\section{A generalized nonautonomous Grobman-Hartman theorem}\label{M1}

We now establish the main result of this paper. This can be regarded as a nonautonomous version of the generalized Grobman-Hartman theorem established in~\cite{B}.
\begin{theorem}\label{theo: GH}
Assume that $(A_n)_{n\in \Z}$ is a sequence of invertible operators in $\mathcal B(X)$ that admits a generalized exponential dichotomy. Furthermore, let $(f_n)_{n\in \Z}$ be a sequence of maps $f_n \colon X\to X$ such that:
\begin{enumerate}
\item there exist $c>0$ such that
\begin{equation}\label{f}
\lVert f_n(x)-f_n(y)\rVert \le c\lVert x-y\rVert, \quad \text{for $n\in \Z$ and $x, y\in X$;}
\end{equation}
\item $A_n+f_n$ is a homeomorphism for each $n\in \Z$;
\item \begin{equation}\label{b}
\sup_{n\in \Z} \lVert f_n\rVert_\infty<+\infty, 
\end{equation}
where 
\[
\lVert f_n\rVert_\infty:=\sup \{\lVert f_n (x)\rVert: x\in X\}.
\]
\end{enumerate}
Then, if $c$ is sufficiently small, there exists a sequence $(H_n)_{n\in \Z}$ of homeomorphisms on $X$ such that 
\begin{equation}\label{lin}
H_{n+1}\circ A_n= (A_n+f_n)\circ H_n \quad \text{for $n\in \Z$.}
\end{equation}
In addition, 
\[
\sup_{n\in \Z}\lVert H_n-\Id \rVert_\infty <+\infty. 
\]
Furthemore, for each $n\in \Z$ and $x\in X$, $H_n(x)-x\in S(n)+A_n^{-1}U(n+1)$. Finally, the sequence $(H_n)_{n\in \Z}$ with the above properties is unique. 
\end{theorem}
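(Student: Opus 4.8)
The plan is to follow the standard Moser-type approach to the Grobman--Hartman theorem, adapted to the nonautonomous setting and to the weaker (one-sided) invariance~\eqref{inv}. Write $B_n:=A_n+f_n$. We seek $H_n=\Id+h_n$ with $(h_n)_{n\in\Z}$ a bounded sequence of continuous maps, and simultaneously its inverse $H_n^{-1}=\Id+g_n$ with $(g_n)$ bounded. The conjugacy equation~\eqref{lin} becomes, after substituting $H_n=\Id+h_n$, a fixed-point equation for $h=(h_n)$: writing $A_n h_n(x)-h_{n+1}(A_nx) = f_n(x+h_n(x))$ plus the correction terms coming from $f_n$, and then rearranging so that the linear operator $h\mapsto (A_nh_n - h_{n+1}\circ A_n)$ is inverted using the dichotomy. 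Concretely, on the space $Y$ of bounded sequences $(z_n)$ with $z_n\in X$, equipped with the sup norm, define the bounded linear operator $(\mathbb{L}z)_n = z_n - A_{n-1}z_{n-1}$ and show, using~\eqref{e11} and~\eqref{e22}, that $\mathbb{L}$ is invertible with $\mathbb{L}^{-1}$ given by the usual Green's-function formula $(\mathbb{L}^{-1}w)_n = \sum_{k\le n}\cA(n,k)P_k w_k - \sum_{k>n}\cA(n,k)(\Id-P_k)w_k$; the bound~\eqref{bp} together with the geometric series from~\eqref{e1}--\eqref{e2} gives $\|\mathbb{L}^{-1}\|\le C$ for an explicit constant $C$ depending only on $D$ and $\lambda$. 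One subtlety relative to the exponential dichotomy case: because~\eqref{inv} is only a one-sided inclusion, $\mathbb{L}^{-1}w$ need not land back in the dichotomy subspaces, but that is fine — we only need $\mathbb{L}^{-1}$ as a bounded operator on $Y$, and the range statement $h_n(x)\in S(n)+A_n^{-1}U(n+1)$ will instead be read off directly from the formula, since $\cA(n,k)P_k x\in S(n)$ for $k\le n$ (by~\eqref{inv}) and $\cA(n,k)(\Id-P_k)x\in A_n^{-1}U(n+1)$ for $k\ge n+1$ (again by~\eqref{inv}).

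With $\mathbb{L}^{-1}$ in hand, I rewrite~\eqref{lin} as a fixed-point problem $h = \Phi(h)$ on $Y$, where $\Phi(h)_n$ involves $\mathbb{L}^{-1}$ applied to a term built from $(f_n)$ evaluated along $\Id+h$; the hypotheses~\eqref{f} and~\eqref{b} guarantee that $\Phi$ maps a suitable ball of $Y$ into itself, and a Lipschitz estimate shows $\Phi$ is a contraction provided $c$ is small enough (the smallness threshold is $c < 1/(2C)$ or similar). This produces a unique bounded continuous $h$, hence $H_n=\Id+h_n$ with $\sup_n\|H_n-\Id\|_\infty<\infty$. Running the symmetric construction for $B_n^{-1}=A_n^{-1}+(\text{correction})$ — here hypothesis~(2), that $B_n$ is a homeomorphism, is exactly what is needed so that the perturbed inverse cocycle again admits a generalized exponential dichotomy with the roles of $S$ and $U$ essentially swapped — yields a bounded continuous $G_n=\Id+g_n$ solving the reverse conjugacy $G_{n+1}\circ B_n = A_n\circ G_n$. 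Then $H_{n+1}\circ G_{n+1}$ and $G_{n+1}\circ H_{n+1}$ both conjugate a dynamics to itself and differ from $\Id$ by a bounded continuous sequence; the uniqueness part (proved next) forces both to equal $\Id$, so each $H_n$ is a homeomorphism.

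For uniqueness, suppose $(H_n)$ and $(\tilde H_n)$ both satisfy~\eqref{lin} with $\sup_n\|H_n-\Id\|_\infty,\sup_n\|\tilde H_n-\Id\|_\infty<\infty$. Then $\Delta_n:=H_n-\tilde H_n$ is a bounded sequence and, subtracting the two conjugacy equations and using that both sides are conjugate to the \emph{same} perturbed system, one derives $\Delta_{n+1}\circ A_n = B_n\circ H_n - B_n\circ\tilde H_n$; estimating with~\eqref{f} and the bound $\|B_n - A_n\|\le c$ shows $(\Delta_n)$ satisfies the homogeneous equation $\mathbb{L}\Delta = (\text{something of norm}\le c'\|\Delta\|)$, so $\|\Delta\| \le Cc'\|\Delta\|$, forcing $\Delta=0$ when $c$ is small. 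I expect the main obstacle to be bookkeeping rather than conceptual: precisely organizing the correction terms so that the fixed-point map $\Phi$ is genuinely a contraction (the composition $f_n\circ(\Id+h_n)$ is only Lipschitz, not linear, so the estimates must go through the Lipschitz constant $c$ and the boundedness~\eqref{b} in the right combination), and verifying that the perturbed inverse system still has a generalized exponential dichotomy — this last point is where the one-sided nature of~\eqref{inv} and the roles of hypotheses~(2) and~(3) must be handled with care.
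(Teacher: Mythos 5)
There is a genuine gap, and it sits exactly at the point where a generalized exponential dichotomy differs from a genuine one. You assert that the operator $(\mathbb{L}z)_n=z_n-A_{n-1}z_{n-1}$ on bounded sequences ``is invertible with $\mathbb{L}^{-1}$ given by the usual Green's-function formula.'' Under a generalized exponential dichotomy this is false: the Green's-function formula provides a bounded \emph{right} inverse (which suffices for the existence part of your argument), but $\mathbb{L}$ is not injective. Because the subspaces satisfy only the one-sided inclusions~\eqref{inv}, the linear system may admit nonzero bounded complete orbits $x_{n+1}=A_nx_n$; this is precisely how the paper, in the remark following the theorem, exhibits two distinct bounded conjugacies ($H_n=\Id$ and $H_n=\Id+x_n$) for $f_n\equiv 0$. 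Consequently your uniqueness step --- ``$\Delta$ satisfies $\mathbb{L}\Delta=(\text{something of norm}\le c'\lVert\Delta\rVert)$, hence $\lVert\Delta\rVert\le Cc'\lVert\Delta\rVert$'' --- silently uses a bound $\lVert\Delta\rVert\le C\lVert\mathbb{L}\Delta\rVert$, i.e.\ a bounded \emph{left} inverse, which does not exist here. Indeed the uniqueness statement you set out to prove (uniqueness among all bounded conjugacies) is simply false in this setting; uniqueness holds only within the class of maps satisfying $H_n(x)-x\in S(n)+A_n^{-1}U(n+1)$, and your argument never invokes that range condition. The same gap propagates to your proof that $H_n$ is a homeomorphism, since you appeal to this uniqueness to conclude $H_n\circ G_n=G_n\circ H_n=\Id$.

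The missing ingredient is the analogue of the paper's Lemma~\ref{lemma: sol is a fixed point}: one must show that \emph{any} bounded solution of the conjugacy equation whose difference from the identity takes values in $S(n)+A_n^{-1}U(n+1)$ is necessarily the fixed point of the contraction. This is not mere bookkeeping: since the splitting is only forward/backward invariant, $P_n$ does not commute with the cocycle, and one has to establish the identity $P_nA_{n-1}g_{n-1}(\cdot)=A_{n-1}P_{n-1}g_{n-1}(\cdot)$ \emph{for the particular vectors} $g_{n-1}(\cdot)\in S(n-1)+A_{n-1}^{-1}U(n)$ and then telescope the resulting relation before letting $j\to\pm\infty$. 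A secondary inaccuracy: hypothesis (2) is not used to give the ``perturbed inverse cocycle'' a generalized dichotomy (the perturbed system is nonlinear, and no such transfer is needed or true); it is used so that the nonlinear evolution $\mathcal F(m,n)$ is defined for $m<n$, which makes the fixed-point operator for the inverse conjugacy a \emph{constant} map. Your existence argument for $H_n$ itself is fine and coincides with the paper's.
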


\begin{remark}\label{rem: A+f homeo}
We recall (see~\cite[p. 433]{BD0} or the arguments in Example~\ref{ex: unbounded}) that if  $a=\sup_{n\in \Z} \lVert A_n^{-1}\rVert<\infty$ and $ac<1$, that then $A_n+f_n$ is a homeomorphism for each $n\in \Z$.
\end{remark}

\begin{proof}[Proof of Theorem~\ref{theo: GH}]
We define $\mathcal Y$ to be the space which consists of all two-sided sequences  $\mathbf h=(h_n)_{n\in \Z}$ of continuous maps on $X$ such that:
\begin{itemize}
\item for $n\in \Z$ and $x\in X$, $h_n(x)\in S(n)+A_n^{-1}U(n+1)$;
\item 
\[
\lVert \mathbf h\rVert_{\mathcal Y} :=\sup_{n\in \Z} \lVert h_n\rVert_\infty <+\infty. 
\]
\end{itemize}
It is easy to verify that $(\mathcal Y, \lVert \cdot \rVert_{\mathcal Y})$ is a Banach space.  We define $\mathcal T\colon \mathcal Y \to \mathcal Y$ by 
\[
\begin{split}
&(\mathcal T\mathbf h)_n (x) \\
&=\sum_{k=-\infty}^n \cA(n, k)P_k (f_{k-1}(\cA(k-1, n)x+h_{k-1}(\cA(k-1, n)x))) \\
&\phantom{=}-\sum_{k=n+1}^\infty \cA(n, k)(\Id-P_k)(f_{k-1}(\cA(k-1, n)x+h_{k-1}(\cA(k-1, n)x))),
\end{split}
\]
for $n\in \Z$, $x\in X$ and  $\mathbf h=(h_n)_{n\in \Z}\in \mathcal Y$. Observe that~\eqref{e11} and~\eqref{e22} imply that 
\[
\begin{split}
\lVert (\mathcal T\mathbf h)_n \rVert_\infty &\le \sum_{k=-\infty}^n De^{-\lambda (n-k)}\lVert f_{k-1}\rVert_\infty +\sum_{k=n+1}^\infty De^{-\lambda (k-n)}\lVert f_{k-1}\rVert_\infty \\
&\le D\frac{1+e^{-\lambda}}{1-e^{-\lambda}}\sup_{n\in \Z} \lVert f_n \rVert_\infty,
\end{split}
\]
for $n\in \Z$ and $\mathbf h=(h_n)_{n\in \Z}\in \mathcal Y$. Hence, it follows from~\eqref{b} that 
\[
\sup_{n\in \Z} \lVert (\mathcal T\mathbf h)_n \rVert_\infty <\infty, \quad \text{for $\mathbf y\in \mathcal Y$.}
\]
Since clearly, $(\mathcal T\mathbf h)_n (x) \in S(n)+A_n^{-1}U(n+1)$ for $n\in \Z$ and $x\in X$, we conclude that $\mathcal T$ is well-defined. 

We will now prove that for $c$  sufficiently small, $\mathcal T$ is a contraction on $\mathcal Y$. Indeed, take $\mathbf h^i=(h_n^i)_{n\in \Z} \in \mathcal Y$, $i=1, 2$. By~\eqref{e11}, \eqref{e22} and~\eqref{f}, we have that 
\[
\begin{split}
\lVert (\mathcal T\mathbf h^1)_n (x)-(\mathcal T\mathbf h^2)_n (x)\rVert &\le c\sum_{k=-\infty}^n De^{-\lambda (n-k)}\lVert h_{k-1}^1-h_{k-1}^2\rVert_\infty \\
&\phantom{\le}+c\sum_{k=n+1}^\infty De^{-\lambda (k-n)}\lVert h_{k-1}^1-h_{k-1}^2\rVert_\infty \\
&\le cD\frac{1+e^{-\lambda}}{1-e^{-\lambda}}\lVert \mathbf h^1-\mathbf h^2\rVert_{\mathcal Y}, 
\end{split}
\]
for $x\in X$ and $n\in \Z$. Hence, if 
\[
cD\frac{1+e^{-\lambda}}{1-e^{-\lambda}}<1, 
\]
we have that $\mathcal T$ is a contraction. Therefore, $\mathcal T$ has a unique fixed point $\mathbf h=(h_n)_{n\in \Z}\in \mathcal Y$.  Thus, we have that
\[
\begin{split}
&h_{n+1}(A_n x)  \\
&=(\mathcal T \mathbf h)_{n+1}(A_n x) \\
&=\sum_{k=-\infty}^{n+1} \cA(n+1, k)P_k (f_{k-1}(\cA(k-1, n)x+h_{k-1}(\cA(k-1, n)x))) \\
&\phantom{=}-\sum_{k=n+2}^\infty \cA(n+1, k)(\Id-P_k)(f_{k-1}(\cA(k-1, n)x+h_{k-1}(\cA(k-1, n)x))) \\
&=A_n h_n(x)+f_n(x+h_n(x)),
\end{split}
\]
for $n\in \Z$ and $x\in X$. Setting $H_n=\Id+h_n$ for $n\in \Z$, we see that~\eqref{lin} holds. 

Hence, a fixed point of $\mathcal T$ induces a solution of~\eqref{lin}. The purpose of the following auxiliary lemma is to establish the converse. 

\begin{lemma}\label{lemma: sol is a fixed point}
Let $(G_n)_{n\in \Z}$ be a sequence of continuous  maps on $X$ satisfying the following conditions:
\begin{itemize}
\item for each $n\in \Z$,
\begin{equation}\label{linn}
G_{n+1}\circ A_n=(A_n+f_n)\circ G_n;
\end{equation}
\item  $\sup_{n\in \Z}\lVert G_n-\Id \rVert_\infty <+\infty$;
\item  for each $n\in \Z$ and $x\in X$, \[G_n(x)-x\in S(n)+A_n^{-1}U(n+1).\]
\end{itemize}
 Then, $\mathbf {g}=(g_n)_{n\in \Z}$ is a fixed point of $\mathcal{T}$, where $g_n=G_n-\Id$ for $n\in \Z$.
\end{lemma}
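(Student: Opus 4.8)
The plan is to convert the conjugacy relation \eqref{linn} into the functional equation defining $\mathcal{T}$ and then use the generalized exponential dichotomy to show this equation has only one bounded solution with values in the prescribed subspaces. Write $g_n=G_n-\Id$. Substituting $G_n=\Id+g_n$ into \eqref{linn} and cancelling the term $A_nx$ yields, for all $n\in\Z$ and $z\in X$,
\[
g_{n+1}(A_nz)=A_ng_n(z)+f_n(z+g_n(z)).
\]
Now fix $n\in\Z$ and $x\in X$, and consider the two-sided sequence $w_k:=g_k(\cA(k,n)x)$, $k\in\Z$. Since $\cA(k+1,n)x=A_k\cA(k,n)x$, the identity above gives the linear recursion $w_{k+1}=A_kw_k+y_k$ with $y_k:=f_k(\cA(k,n)x+w_k)$; moreover $\sup_k\lVert w_k\rVert\le\sup_k\lVert G_k-\Id\rVert_\infty<\infty$, $\sup_k\lVert y_k\rVert\le\sup_k\lVert f_k\rVert_\infty<\infty$ by \eqref{b}, and $w_k\in S(k)+A_k^{-1}U(k+1)$ by hypothesis.

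Next I would introduce the candidate particular solution
\[
\tilde w_k:=\sum_{j=-\infty}^k\cA(k,j)P_jy_{j-1}-\sum_{j=k+1}^\infty\cA(k,j)(\Id-P_j)y_{j-1}.
\]
Using \eqref{e11}--\eqref{e22} and the boundedness of $(y_k)$, both series converge absolutely and $(\tilde w_k)$ is bounded; using \eqref{inv} one checks that $\cA(k,j)P_jy_{j-1}\in S(k)$ for $j\le k$ and $\cA(k,j)(\Id-P_j)y_{j-1}\in A_k^{-1}U(k+1)$ for $j\ge k+1$, so that $\tilde w_k\in S(k)+A_k^{-1}U(k+1)$; and a direct computation (splitting off the $j=k+1$ term and using $A_k\cA(k,k+1)=\Id$) shows $\tilde w_{k+1}=A_k\tilde w_k+y_k$. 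The point is that, by the very definition of $\mathcal{T}$, $\tilde w_n=(\mathcal{T}\mathbf g)_n(x)$, so everything reduces to proving $w_n=\tilde w_n$.

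Set $v_k:=w_k-\tilde w_k$. Then $(v_k)$ is bounded, $v_k\in S(k)+A_k^{-1}U(k+1)$, and $v_{k+1}=A_kv_k$ for every $k$. The step I expect to be the main obstacle is exactly here: since the splitting $X=S(k)\oplus U(k)$ is \emph{not} $A_k$-invariant in the generalized setting, one cannot simply project the recursion $v_{k+1}=A_kv_k$ onto stable and unstable parts. The remedy is to exploit the structural assumption $v_k\in S(k)+A_k^{-1}U(k+1)$, which encodes the one-sided invariance \eqref{inv}: writing $v_k=s_k+A_k^{-1}u_{k+1}$ with $s_k\in S(k)$ and $u_{k+1}\in U(k+1)$, we have $A_k^{-1}u_{k+1}\in U(k)$ by \eqref{inv}, hence $P_kv_k=s_k$ and $(\Id-P_k)v_k=A_k^{-1}u_{k+1}$; applying $A_k$ and using $A_ks_k\in S(k+1)$, $u_{k+1}\in U(k+1)$ gives $P_{k+1}v_{k+1}=A_kP_kv_k$ and $(\Id-P_{k+1})v_{k+1}=A_k(\Id-P_k)v_k$. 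Thus the components $p_k:=P_kv_k\in S(k)$ and $q_k:=(\Id-P_k)v_k\in U(k)$ \emph{do} satisfy clean recursions $p_{k+1}=A_kp_k$, $q_{k+1}=A_kq_k$, and both are bounded by \eqref{bp}.

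Finally I would kill $p$ and $q$ using the dichotomy estimates. From $p_{k+1}=A_kp_k$ we get $p_k=\cA(k,m)p_m$ for $m\le k$, so \eqref{e1} gives $\lVert p_k\rVert\le De^{-\lambda(k-m)}\sup_j\lVert p_j\rVert\to0$ as $m\to-\infty$, whence $p_k\equiv0$; symmetrically $q_k=\cA(k,m)q_m$ for $m\ge k$, so \eqref{e2} gives $\lVert q_k\rVert\le De^{-\lambda(m-k)}\sup_j\lVert q_j\rVert\to0$ as $m\to+\infty$, whence $q_k\equiv0$. Therefore $v_k=p_k+q_k=0$ for every $k$; in particular $g_n(x)=w_n=\tilde w_n=(\mathcal{T}\mathbf g)_n(x)$, and since $n\in\Z$ and $x\in X$ were arbitrary, $\mathbf g$ is a fixed point of $\mathcal{T}$.
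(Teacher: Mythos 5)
Your argument is correct. It rests on the same two pillars as the paper's own proof --- the observation that the structural condition $v_k\in S(k)+A_k^{-1}U(k+1)$ forces the projections to commute with $A_k$ along orbits (your identities $P_{k+1}v_{k+1}=A_kP_kv_k$ and $(\Id-P_{k+1})v_{k+1}=A_k(\Id-P_k)v_k$ are exactly the content of~\eqref{eq: Pn}), and the dichotomy decay~\eqref{e1}--\eqref{e2} applied to a bounded sequence --- but it packages them differently. The paper iterates the recursion down to a finite level $j$, proves by induction the projected Duhamel-type formula~\eqref{1205}, and then lets $j\to-\infty$ (resp.\ $j\to+\infty$) so that the boundary term $\cA(n,j-1)P_{j-1}g_{j-1}(\cdots)$ dies. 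You instead exhibit the Green's-function series $\tilde w_k$ as a particular bounded solution of the affine recursion $w_{k+1}=A_kw_k+y_k$ with the correct subspace structure, and reduce the lemma to the statement that the homogeneous equation $v_{k+1}=A_kv_k$ admits no nonzero bounded solution with $v_k\in S(k)+A_k^{-1}U(k+1)$. Your formulation is arguably cleaner: it isolates the uniqueness mechanism that the remark following Theorem~\ref{theo: GH} (non-uniqueness of $H_n$ without the structural condition) is really about, and it replaces the induction on $j$ by a one-line subtraction. The price is the extra verification that $\tilde w_k$ solves the recursion, is bounded, and lies in $S(k)+A_k^{-1}U(k+1)$, all of which you carry out correctly via~\eqref{e11}, \eqref{e22} and~\eqref{inv}; note also that your step identifying $P_kv_k=s_k$ uses $S(k)\cap U(k)=\{0\}$ and~\eqref{inv} in precisely the way the paper does when it deduces $u_n\in U(n)$ from~\eqref{eq: S+U}.
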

\begin{proof}[Proof of Lemma~\ref{lemma: sol is a fixed point}]
We start by observing that it follows from~\eqref{linn} that
\begin{displaymath}
\begin{split}
g_n(A_{n-1}x)&=(G_n-\Id)(A_{n-1}x)\\
&=G_n(A_{n-1}x)-A_{n-1}x\\
&=(A_{n-1}+f_{n-1})(G_{n-1}(x))-A_{n-1}x\\
&=A_{n-1}(g_{n-1}(x))+f_{n-1}(x+ g_{n-1}(x)),
\end{split}
\end{displaymath}
which implies that
\begin{equation}\label{eq: iteration}
g_n(x)=A_{n-1}(g_{n-1}(A_{n-1}^{-1}x))+f_{n-1}(A_{n-1}^{-1}x+g_{n-1}(A_{n-1}^{-1}x)).
\end{equation}
By iterating~\eqref{eq: iteration}, we conclude that for each $j\leq n$, 
\[
\begin{split}
g_n(x) &=\cA(n, j-1)g_{j-1}(\cA(j-1,n)x) \\
&\phantom{=}+\sum_{k=j}^{n}\cA(n, k)f_{k-1}(\cA(k-1,n)x+ g_{k-1}(\cA(k-1,n)x)).
\end{split}
\]
We now claim that
\begin{equation}\label{1205}
\begin{split}
P_ng_n(x)&=\cA(n, j-1)P_{j-1}g_{j-1}(\cA(j-1,n)x)\\
&\phantom{=}+\sum_{k=j}^{n}\cA(n, k)P_kf_{k-1}(\cA(k-1,n)x+ g_{k-1}(\cA(k-1,n)x)),
\end{split}
\end{equation}
for every $j\leq n$. We begin by proving that 
\begin{equation}\label{eq: Pn}
P_nA_{n-1}g_{n-1}(A_{n-1}^{-1}x)=A_{n-1}P_{n-1}g_{n-1}(A_{n-1}^{-1}x).
\end{equation}
Let us write 
\[
A_{n-1}g_{n-1}(A_{n-1}^{-1}x)=A_{n-1}P_{n-1}g_{n-1}(A_{n-1}^{-1}x)+u_{n}.
\]
Thus, 
\begin{equation}\label{eq: S+U}
g_{n-1}(A_{n-1}^{-1}x)=P_{n-1}g_{n-1}(A_{n-1}^{-1}x)+A_{n-1}^{-1}u_{n}.
\end{equation}
By applying  $P_{n-1}$ on both sides of the above  equality,  we conclude that
\[
P_{n-1}g_{n-1}(A_{n-1}^{-1}x)=P_{n-1}g_{n-1}(A_{n-1}^{-1}x)+P_{n-1}A_{n-1}^{-1}u_{n},
\]
which implies that $P_{n-1}A_{n-1}^{-1}u_{n}=0$. Consequently, $A_{n-1}^{-1}u_{n}\in U(n-1)$.  Therefore, since $g_{n-1}(A_{n-1}^{-1}x)\in S(n-1)+A_{n-1}^{-1}U(n)$, $P_{n-1}g_{n-1}(A_{n-1}^{-1}x)\in S(n-1)$ and $S(n-1)\cap U(n-1)=\{0\}$, we conclude using~\eqref{eq: S+U} that
$u_n\in U(n)$ which easily implies that~\eqref{eq: Pn} holds. 

We now prove~\eqref{1205} by induction. For $j=n$, we have using~\eqref{eq: iteration} and~\eqref{eq: Pn} that 
\begin{equation}\label{606}
\begin{split}
& A_{n-1}P_{n-1}g_{n-1}(A_{n-1}^{-1} x)+P_nf_{n-1}(A_{n-1}^{-1}x+ g_{n-1}(A_{n-1}^{-1}x)) \\
&=P_nA_{n-1}g_{n-1}(A_{n-1}^{-1}x)+P_nf_{n-1}(A_{n-1}^{-1}x+ g_{n-1}(A_{n-1}^{-1}x)) \\
&=P_ng_n(x).
\end{split}
\end{equation}
Assume now that~\eqref{1205} holds for $j$ and we prove that it holds for $j-1$. We have that
\[
\begin{split}
&\sum_{k=j-1}^{n}\cA(n, k)P_kf_{k-1}(\cA(k-1,n)x+ g_{k-1}(\cA(k-1,n)x))\\
&=\sum_{k=j}^{n}\cA(n, k)P_kf_{k-1}(\cA(k-1,n)x+ g_{k-1}(\cA(k-1,n)x)) \\
&\phantom{=}+\cA(n, j-1)P_{j-1}f_{j-2}(\cA(j-2,n)x+ g_{j-2}(\cA(j-2,n)x))\\
&=P_ng_n(x)-\cA(n, j-1)P_{j-1}g_{j-1}(\cA(j-1,n)x)\\
&\phantom{=}+\cA(n, j-1)P_{j-1}f_{j-2}(\cA(j-2,n)x+ g_{j-2}(\cA(j-2,n)x)).
\end{split}
\]
On the other hand, \eqref{606} implies that 
\[
\begin{split}
&-\cA(n, j-2)P_{j-2}g_{j-2}(\cA(j-2, n)x)\\
 &=-\cA(n, j-1)P_{j-1}g_{j-1}(\cA(j-1,n)x)\\
&\phantom{=}+\cA(n, j-1)P_{j-1}f_{j-2}(\cA(j-2,n)x+ g_{j-2}(\cA(j-2,n)x)),
\end{split}
\]
and the desired conclusion follows. 

Using~\eqref{1205} and  since $\sup_{n\in \Z}\lVert G_n-\Id \rVert_\infty <+\infty$ and $g_{j-1}=G_{j-1}-\Id$, it follows that $\|\cA(n, j-1)P_{j-1}g_{j-1}(\cA(j-1,n)x)\| \to 0$ when  $j\to -\infty$. Therefore,
\[
P_ng_n(x)=\sum_{k=-\infty}^{n}\cA(n, k)P_kf_{k-1}(\cA(k-1,n)x+ g_{k-1}(\cA(k-1,n)x)).
\]
Similarly we can prove that
\[
(\Id-P_n)g_n(x)=-\sum_{k=n+1}^\infty \cA(n, k)(\Id-P_k)(f_{k-1}(\cA(k-1, n)x+g_{k-1}(\cA(k-1, n)x))).
\]
By  combining the last two equalities, we conclude that 
\begin{displaymath}
\begin{split}
g_n(x)&=P_ng_n(x)+(\Id-P_n)g_n(x) \\
&=\sum_{k=-\infty}^n \cA(n, k)P_k (f_{k-1}(\cA(k-1, n)x+g_{k-1}(\cA(k-1, n)x))) \\
&\phantom{=}-\sum_{k=n+1}^\infty \cA(n, k)(\Id-P_k)(f_{k-1}(\cA(k-1, n)x+g_{k-1}(\cA(k-1, n)x)))\\
&=(\mathcal T\mathbf g)_n (x).
\end{split}
\end{displaymath}
Hence, 
\begin{displaymath}
\mathcal{T}(\mathbf{g})=\mathbf{g},
\end{displaymath}
and the proof of the lemma is completed. 
\end{proof}

We now define $\mathcal T' \colon \mathcal Y \to \mathcal Y$ by 
\[
\begin{split}
(\mathcal T'\mathbf h)_n (x) 
&=-\sum_{k=-\infty}^n \cA(n, k)P_kf_{k-1}(\mathcal F(k-1, n)x) \\
&\phantom{=}+\sum_{k=n+1}^\infty \cA(n, k)(\Id-P_k)f_{k-1}(\mathcal F(k-1, n)x),
\end{split}
\]
where
\[
\mathcal F(m, n)=\begin{cases}
F_{m-1}\circ \ldots F_n & \text{for $m>n$;}\\
\Id & \text{for $m=n$;}\\
F_{m+1}^{-1}\circ \ldots \circ F_n^{-1} & \text{for $m<n$,}
\end{cases}
\]
and $F_n=A_n+f_n$, $n\in \Z$. 

Again, it follows easily from~\eqref{e11}, \eqref{e22} and~\eqref{b} that $\mathcal T'$ is well-defined. Moreover, we observe that in fact, $\mathcal T'$ a constant map and thus it has  a unique fixed point $\bar{\mathbf h}=(\bar{h}_n)_{n\in \Z}\in \mathcal Y$. We have that 
\[
\begin{split}
&\bar{h}_{n+1}(F_n(x))  \\
&=(\mathcal T' \bar{\mathbf h})_{n+1}(F_n (x)) \\
&=-\sum_{k=-\infty}^{n+1} \cA(n+1, k)P_kf_{k-1}(\mathcal F(k-1, n)x) \\
&\phantom{=}+\sum_{k=n+2}^\infty \cA(n+1, k)(\Id-P_k)f_{k-1}(\mathcal F(k-1, n)x) \\
&=A_n \bar{h}_n(x)-f_n(x),
\end{split}
\]
for $n\in \Z$ and $x\in X$. Setting $\bar{H}_n=\Id+\bar{h} _n$ for $n\in \Z$, we have that
\begin{equation}\label{l2}
\bar{H}_{n+1}\circ F_n=A_n \circ \bar{H}_n, \quad \text{for $n\in \Z$.}
\end{equation}
The following lemma can be proved by arguing as in the proof of Lemma~\ref{lemma: sol is a fixed point}.
\begin{lemma}\label{lemma: sol is a fixed point2}
Let $(R_n)_{n\in \Z}$ be a sequence of continuous  maps on $X$ satisfying the following conditions:
\begin{itemize}
\item for each $n\in \Z$,
\[
R_{n+1}\circ F_n=A_n\circ R_n;
\]
\item  $\sup_{n\in \Z}\lVert R_n-\Id \rVert_\infty <+\infty$;
\item  for each $n\in \Z$ and $x\in X$, \[R_n(x)-x\in S(n)+A_n^{-1}U(n+1).\]
\end{itemize}
 Then, $\mathbf {r}=(r_n)_{n\in \Z}$ is a fixed point of $\mathcal{T'}$, where $r_n=R_n-\Id$ for $n\in \Z$.
\end{lemma}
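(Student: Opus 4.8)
The plan is to transcribe the proof of Lemma~\ref{lemma: sol is a fixed point} almost verbatim, keeping track of the two changes produced by the fact that now the conjugacy couples $F_n=A_n+f_n$ with $A_n$ in the reverse order: the sign in front of the $f$-terms is flipped, and there is no nested occurrence of the unknown inside $f_n$. Set $r_n=R_n-\Id$. Exactly as in the opening lines of the proof of Lemma~\ref{lemma: sol is a fixed point}, the relation $R_{n+1}\circ F_n=A_n\circ R_n$ gives
\[
r_{n+1}(F_n(x))=A_n r_n(x)-f_n(x),\qquad n\in\Z,\ x\in X.
\]
Since every $F_n=A_n+f_n$ is a homeomorphism, the maps $\mathcal F(m,n)$ are well defined, and replacing $n$ by $n-1$ and then $x$ by $F_{n-1}^{-1}(x)=\mathcal F(n-1,n)x$ in the identity above gives the analogue of~\eqref{eq: iteration},
\[
r_n(x)=A_{n-1}r_{n-1}(\mathcal F(n-1,n)x)-f_{n-1}(\mathcal F(n-1,n)x).
\]
Note that, unlike in~\eqref{eq: iteration}, $r_{n-1}$ does not appear inside the argument of $f_{n-1}$; this is precisely why $\mathcal T'$ turned out to be a constant map. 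Iterating this relation downwards yields, for every $j\le n$,
\[
r_n(x)=\cA(n,j-1)r_{j-1}(\mathcal F(j-1,n)x)-\sum_{k=j}^n\cA(n,k)f_{k-1}(\mathcal F(k-1,n)x).
\]

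Next I would split by the projections, following~\eqref{eq: Pn}--\eqref{1205} line by line. The identity $P_nA_{n-1}w=A_{n-1}P_{n-1}w$, valid for every $w\in S(n-1)+A_{n-1}^{-1}U(n)$, relies only on~\eqref{inv}, on $S(n)\cap U(n)=\{0\}$ and on that membership, so it applies to $w=r_{n-1}(\mathcal F(n-1,n)x)$ by the third hypothesis on $(R_n)_{n\in\Z}$. Arguing by induction on $j$ as for~\eqref{1205} gives, for every $j\le n$,
\[
P_nr_n(x)=\cA(n,j-1)P_{j-1}r_{j-1}(\mathcal F(j-1,n)x)-\sum_{k=j}^n\cA(n,k)P_kf_{k-1}(\mathcal F(k-1,n)x),
\]
and letting $j\to-\infty$, using~\eqref{e11} together with $\sup_{n\in\Z}\lVert R_n-\Id\rVert_\infty<\infty$ so that the boundary term vanishes, we arrive at $P_nr_n(x)=-\sum_{k=-\infty}^n\cA(n,k)P_kf_{k-1}(\mathcal F(k-1,n)x)$. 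For the complementary component, the same identity gives, upon subtraction, $(\Id-P_n)A_{n-1}w=A_{n-1}(\Id-P_{n-1})w$ for the same class of $w$; applying its shifted form to $w=r_n(x)$ (legitimate since $r_n(x)\in S(n)+A_n^{-1}U(n+1)$), iterating the basic recursion upwards and letting $j\to+\infty$ with~\eqref{e22} in place of~\eqref{e11}, we obtain $(\Id-P_n)r_n(x)=\sum_{k=n+1}^\infty\cA(n,k)(\Id-P_k)f_{k-1}(\mathcal F(k-1,n)x)$. Adding these two expressions and comparing with the definition of $\mathcal T'$ shows $r_n(x)=(\mathcal T'\mathbf r)_n(x)$, so $\mathbf r=(r_n)_{n\in\Z}$ is a fixed point of $\mathcal T'$; since $\mathcal T'$ is constant, this also re-proves that $\bar H_n$ is the unique sequence enjoying the properties listed in Lemma~\ref{lemma: sol is a fixed point2}.

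I do not anticipate a serious obstacle, since the statement is tailored to mirror Lemma~\ref{lemma: sol is a fixed point} and $\mathcal T'$ is even simpler than $\mathcal T$. The step demanding the most care is the bookkeeping with the two projections: because $S(n)$ and $U(n)$ need not be $A_n$-invariant in both directions, one may push $P_n$ through $A_n$ only in the stable direction and $\Id-P_n$ through $A_n^{-1}$ only in the unstable direction, and in each case the relevant commutation relation holds \emph{only} because of the precise range condition $R_n(x)-x\in S(n)+A_n^{-1}U(n+1)$ (the weaker statement $R_n(x)-x\in S(n)+U(n)$ would not suffice). Apart from that, the argument is a mechanical adaptation of the proof of Lemma~\ref{lemma: sol is a fixed point}: replace $\cA$ by $\mathcal F$ in the arguments, drop the nested $g$-term, replace $\mathcal T$ by $\mathcal T'$, and reverse the sign in front of the $f$-terms.
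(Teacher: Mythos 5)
Your proof is correct and follows exactly the route the paper intends: the paper gives no separate argument for this lemma, stating only that it "can be proved by arguing as in the proof of Lemma~\ref{lemma: sol is a fixed point}", and your write-up carries out precisely that adaptation (reversed sign on the $f$-terms, no nested unknown inside $f_{k-1}$, replacement of $\cA$ by $\mathcal F$ in the arguments, and the same projection commutation justified by the range condition $R_n(x)-x\in S(n)+A_n^{-1}U(n+1)$). The signs in your final identities for $P_n r_n(x)$ and $(\Id-P_n)r_n(x)$ match the definition of $\mathcal T'$, so the conclusion follows as claimed.
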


It follows easily from~\eqref{lin} and~\eqref{l2} that 
\begin{equation}\label{934f}
\bar{H}_{n+1}\circ H_{n+1}\circ A_n=A_n \circ \bar{H}_n\circ H_n, \quad \text{for each $n\in \Z$.}
\end{equation}
We claim that  $\bar{H}_{n}\circ H_n=\Id$ for each $n\in \Z$. Indeed, let $G_n:=\bar{H}_n \circ H_n$ and $g_n:=G_n-\Id$ for $n\in \Z$. Observe that 
\[
g_n(x)=G_n(x)-x=\bar{H}_n(H_n(x))-H_n(x)+H_n(x)-x =\bar{h}_n(H_n(x))+h_n(x),
\]
for $x\in X$ and thus since $\mathbf h, \bar{\mathbf h}\in \mathcal Y$, we have that  $\sup_{n\in \Z} \lVert g_n\rVert_\infty <\infty$. Moreover, using again that $\mathbf h, \bar{\mathbf h}\in \mathcal Y$, we have that $g_n(x)\in S(n)+A_n^{-1}U(n+1)$
for $n\in \Z$ and $x\in X$. We conclude that $\mathbf g=(g_n)_{n\in \Z}=(G_n-\Id)_{n\in \Z}\in \mathcal Y$. It follows from~\eqref{934f} and Lemma~\ref{lemma: sol is a fixed point}  that $\mathbf g$ is a fixed point of $\mathcal T$ in the case when $f_n=0$ for $n\in \Z$. Due to the uniqueness of the fixed point for $\mathcal T$, we can easily conclude that
$g_n=0$ for $n\in \Z$ and thus $G_n=\bar{H}_n\circ H_n=\Id$ for each $n\in \Z$. 
 Similarly, using Lemma~\ref{lemma: sol is a fixed point2} one can easily show that  $H_n \circ \bar{H}_n=\Id$ for each $n\in \Z$ and therefore, 
$H_n$ is a homeomorphism for each $n\in \Z$.  A similar argument also gives us uniqueness of the sequence $(H_n)_{n\in \Z}$, thus completing the proof of the theorem. 
\end{proof}

\begin{remark}
We emphasize that the sequence $(H_n)_{n\in \Z}$ given by  Theorem~\ref{theo: GH} is not unique if we omit the condition that $H_n(x)-x\in S(n)+A_n^{-1}U(n+1)$ for $n\in \Z$ and $x\in X$. Indeed, take a sequence $(A_n)_{n\in \Z}$ that admits a generalized exponential dichotomy but that it doesn't 
admit an exponential dichotomy. Furthermore, set $f_n=0$ for $n\in \Z$. It follows easily from~\cite[Theorem 7.6.5.]{H} that there exists a nonzero sequence $(x_n)_{n\in \Z}\subset X$ such that
\[
\sup_{n\in \Z}\lVert x_n \rVert<\infty \quad \text{and} \quad \text{$x_{n+1}=A_n x_n$ for $n\in \Z$.}
\]
For $n\in \Z$, set
\[
H_n(x)=x+x_n, \quad x\in X.
\]
Then, $(H_n)_{n\in \Z}$ is a sequence of homeomorphisms on $X$ such that~\eqref{lin} holds (recall that $f_n=0$). In addition,  $\sup_{n\in \Z}\lVert H_n-\Id\rVert_\infty <\infty$. However, there exists $n\in \Z$ such that $x_n\neq 0$ and consequently $H_n\neq \Id$.

This example also shows that Lemmas~\ref{lemma: sol is a fixed point} and~\ref{lemma: sol is a fixed point2} were crucial to show that $H_n$ and $H_n'$ constructed in the proof of Theorem~\ref{theo: GH} are inverses of each other. 
\end{remark}

\begin{remark}
We stress that the proof of Theorem~\ref{theo: GH} is heavily inspired by the proof of~\cite[Theorem 1]{B}. Indeed, the space $\mathcal Y$ and the map $\mathcal T$ represent a natural nonautonomous versions of the corresponding objects introduced in the proof of~\cite[Theorem 1]{B}.

\end{remark}

\section{H\"older Conjugacies} \label{M2}
In this section we consider a special class of nonautonomous systems admitting a generalized exponential dichotomy and prove that, restricted to this class, the conjugacies given by Theorem \ref{theo: GH} are H\"older continuous.  Again, this result and its proof are inspired by~\cite[Theorem 3]{B}.

Let $(A_n)_{n\in \Z}$ be a sequence of invertible operators in $\mathcal B(X)$ that admits a generalized exponential dichotomy. Moreover, assume there exist $\rho >0$ and numbers $C_{m,n}\geq 1$ for $m,n\in \Z$ such that 
\begin{equation}\label{eq: tempered1}
\lim_{m\to \pm \infty}\frac{1}{m}\log C_{m,n}=0 \text{ for every } n\in \Z,
\end{equation}
and
\begin{equation}\label{eq: upper grow}
\lVert \cA(m, n)x\rVert \le C_{m,n} e^{\rho |m-n|}\lVert x\rVert  \text{ for every } x\in X \text{ and } m,n\in \Z.
\end{equation}
Finally, we consider $\alpha_0>0$ given by
$
 \alpha_0 =\lambda /  \rho$, where $\lambda >0$ is as in Definition~\ref{def: gexp}. 

We observe that it follows from~\eqref{eq: tempered1} that for each $n\in \Z$ and $\varepsilon>0$, we have that 
\begin{equation}\label{eq: tempered2}
C_n=C_n(\varepsilon):=\sup_{m\in \Z} (C_{m,n}e^{-\varepsilon |m-n|}) <+\infty.
\end{equation}

\begin{theorem}\label{theo: Holder}
Let $\alpha \in (0,\alpha_0)$ and take $\varepsilon>0$ such that $\alpha(\rho+\varepsilon)<\lambda$. Let $C_n=C_n(\epsilon)$ be given by~\eqref{eq: tempered2}.  Moreover, suppose that there exists $c>0$ such that 
\begin{equation}\label{eq: f_n Lips 2}
\|f_n(x)-f_n(y)\|\leq C_{n+1}^{-1}c\|x-y\| \text{ for every } x,y\in X \text{ and } n\in \Z.
\end{equation}
Then, whenever $c>0$ is sufficiently small, the conjugacies $H_n$ and $H_n^{-1}$ given by Theorem \ref{theo: GH} are $\alpha$-H\"older continuous when restricted to any bounded subset of $X$.
\end{theorem}

\begin{remark}
Observe that  since $C_{n,n}\geq 1$ we have that $C_n\ge 1$ for every $n\in \Z$. Consequently,  condition \eqref{eq: f_n Lips 2} implies~\eqref{f}.
\end{remark}

\begin{proof}
By \eqref{b}, we have  that there exists $M>1$ such that $\|f_n(x)\|\le M$ for every $x\in X$ and $n\in \Z$. It follows from~\eqref{f} that 
\begin{equation}\label{eq: f is Holder}
\begin{split}
\|f_n(x)-f_n(y)\|&= \|f_n(x)-f_n(y)\|^{1-\alpha}\|f_n(x)-f_n(y)\|^{\alpha}\\
&\leq 2M\|f_n(x)-f_n(y)\|^{\alpha} \\
&\leq 2M c^\alpha \|x-y\|^\alpha,
\end{split}
\end{equation}
for $n\in \Z$ and $x, y\in X$.

We start  by proving that $H_n^{-1}$ is $\alpha$-H\"older. Recall from the proof of Theorem \ref{theo: GH} that $H_n^{-1}=\text{Id}+\overline{h}_n$, where 
\begin{displaymath}
\begin{split}
 \overline{h}_n (x)&=-\sum_{k=-\infty}^n \cA(n, k)P_kf_{k-1}(\mathcal F(k-1, n)x) \\
&\phantom{=}+\sum_{k=n+1}^\infty \cA(n, k)(\Id-P_k)f_{k-1}(\mathcal F(k-1, n)x).
\end{split}
\end{displaymath}
Given $n\in \Z$ and $x,y\in X$, we have that
\[
\begin{split}
& \|\overline{h}_n (x) -\overline{h}_n (y)\| \\
&= \bigg{\|} -\sum_{k=-\infty}^n \cA(n, k)P_k S^n_{k-1}+\sum_{k=n+1}^\infty \cA(n, k)(\Id-P_k)S^n_{k-1}\bigg{ \|},
\end{split}
\]
where 
\begin{displaymath}
\begin{split}
S_{k-1}^n&=f_{k-1}(\mathcal{F} (k-1, n)x)-f_{k-1}(\mathcal{F} (k-1, n)y).\\
\end{split}
\end{displaymath}
Hence,  \eqref{e11} and \eqref{e22} imply that 
\begin{equation}\label{eq: aux 2 H}
 \|\overline{h}_n (x) -\overline{h}_n (y)\| \le \sum_{k=-\infty}^n D e^{-\lambda (n-k)} \|S^n_{k-1}\|+\sum_{k=n+1}^\infty D e^{-\lambda (k-n)} \|S^n_{k-1} \|.
\end{equation}
On the other hand,  \eqref{eq: f is Holder} implies that 
\begin{equation}\label{cc}
\|S_{k-1}^n\|\leq 2Mc^\alpha\|\mathcal{F} (k-1, n)x-\mathcal{F} (k-1, n)y\|^\alpha.
\end{equation}

Our objective now is to estimate the size of $\|\mathcal{F} (k, n)x-\mathcal{F} (k, n)y\|$. We first introduce certain adapted norms. For each $n\in \Z$ and $x\in X$, let 
\begin{equation*}
\|x\|_n= \sup_{k\in \Z}\left\{ \|\cA(k,n)x\|e^{-(\rho+\varepsilon)|k-n|}\right\}.
\end{equation*}
It follows from~\eqref{eq: upper grow} and~\eqref{eq: tempered2} that
\begin{equation}\label{eq: comp norms}
\|x\|\leq \|x\|_n\leq C_n\|x\|, \text{ for every } x\in X \text{ and }n\in \Z.
\end{equation}
Moreover, 
\begin{equation}\label{eq: upper bound adapted norm}
\begin{split}
\|\cA(m,n)x\|_m&=\sup_{k\in \Z}\left\{ \|\cA(k,m)\cA(m,n)x\|e^{-(\rho+\varepsilon)|k-m|}\right\}\\
&\leq \sup_{k\in \Z}\left\{ \|\cA(k,n)x\|e^{-(\rho+\varepsilon)|k-n|} e^{(\rho+\varepsilon)|m-n|} \right\}\\
&\leq  e^{(\rho+\varepsilon)|m-n|} \|x\|_n,
\end{split}
\end{equation}
for every $x\in X$ and $m,n\in\Z$. Furthermore, observe that condition \eqref{eq: f_n Lips 2} combined with \eqref{eq: comp norms} implies that
\begin{equation}\label{eq: f_n Lips adapted}
\begin{split}
\|f_n(x)-f_n(y)\|_{n+1}&\leq C_{n+1} \|f_n(x)-f_n(y)\|\\
&\leq C_{n+1} C_{n+1}^{-1}c\|x-y\|\\
&\leq c \|x-y\|_k,
\end{split}
\end{equation}
for every $x,y\in X$ and $n,k\in \Z$.
%Similarly, from \eqref{b} it follows that there exists $M>1$ so that $\|f_n(x)\|\le M$ for every $x\in X$ and $n\in \Z$. Thus, using \eqref{eq: f_n Lips 2} and \eqref{eq: comp norms} we get that
%\begin{equation}\label{eq: f is Holder}
%\begin{split}
%\|f_n(x)-f_n(y)\|&\leq  \|f_n(x)-f_n(y)\|^{1-\alpha}\|f_n(x)-f_n(y)\|^{\alpha}\\
%&\leq 2M\|f_n(x)-f_n(y)\|^{\alpha} \\
%&\leq 2M \left(C_{n+1}^{-1}c \right)^\alpha \|x-y\|^\alpha\\
%&\leq 2M c^\alpha \|x-y\|^\alpha.
%\end{split}
%\end{equation}

We now claim that for every $k\geq n$ and $x, y\in X$, we have that 
\begin{equation}\label{c1}
\|\mathcal{F} (k, n)x-\mathcal{F} (k, n)y\| \leq C_n (e^{\rho+\varepsilon}+c)^{k-n}\|x-y\|.
\end{equation}
Indeed, using \eqref{eq: comp norms}, \eqref{eq: upper bound adapted norm} and \eqref{eq: f_n Lips adapted} we get that
\begin{displaymath}
\begin{split}
\|\mathcal{F} (n+1, n)x-\mathcal{F} (n+1, n)y\|_{n+1}&=\|(A_n+f_n)x-(A_n+f_n)y\|_{n+1}\\
&\leq \|A_n(x-y)\|_{n+1}+\|f_n(x)-f_n(x)\|_{n+1} \\
&\leq (e^{\rho+\varepsilon}+c)\|x-y\|_{n}.
\end{split}
\end{displaymath}
Thus, proceeding by induction we conclude for every $k\geq n$ and $x, y\in X$,
\begin{displaymath}
\begin{split}
\|\mathcal{F} (k, n)x-\mathcal{F} (k, n)y\|_{k}&\leq (e^{\rho+\varepsilon}+c)^{k-n}\|x-y\|_{n}.
\end{split}
\end{displaymath}
Consequently, using \eqref{eq: comp norms} we conclude that \eqref{c1} holds.
Similarly, for every $c>0$ small enough we claim that
\begin{equation}\label{c2}
\|\mathcal{F} (k, n)x-\mathcal{F} (k, n)y\| \leq C_n \left(\frac{e^{\rho+\varepsilon}}{1-ce^{\rho+\varepsilon}}\right)^{n-k}\|x-y\,
\end{equation}
for every $k\leq n$ and $x, y\in X$. In fact, recalling that $F_j=A_j+f_j$, we have that $F_j^{-1}(x)=A_j^{-1}x-A_j^{-1}(f_j(F_ j^{-1}(x)))$. Consequently, 
\begin{displaymath}
\begin{split}
\|F_j^{-1}(x)-F_j^{-1}(y)\|_{j}&\leq\|A_j^{-1}x-A_j^{-1}y\|_j+\|A_j^{-1}(f_j(F_ j^{-1}x))-A_j^{-1}(f_j(F_ j^{-1}y))\|_{j} \\
&\leq e^{\rho+\varepsilon} \|x-y\|_{j+1}+ e^{\rho+\varepsilon} \|f_j(F_ j^{-1}x)-f_j(F_ j^{-1}y)\|_{j+1}\\
& \leq  e^{\rho+\varepsilon}\|x-y\|_{j+1}+ce^{\rho+\varepsilon}\|F_ j^{-1}(x)-F_ j^{-1}(y)\|_j.
\end{split}
\end{displaymath}
Therefore, whenever $ce^{\rho+\varepsilon} <1$, we have that 
\begin{displaymath}
\|F_j^{-1}(x)-F_j^{-1}(y)\|_{j} \leq \frac{e^{\rho+\varepsilon}}{1-ce^{\rho+\varepsilon}}\|x-y\|_{j+1}.
\end{displaymath}
Again,  proceeding by induction and using \eqref{eq: comp norms} we conclude that~\eqref{c2} holds. Hence, it follows from~\eqref{eq: aux 2 H}, \eqref{cc}, \eqref{c1} and~\eqref{c2} that
\begin{displaymath}
\begin{split}
\|\overline{h}_n (x) -\overline{h}_n (y)\|&\leq 
2MDC_n^\alpha c^\alpha \sum_{k=-\infty}^{n}  e^{-\lambda (n-k)} (e^{\rho+\varepsilon} +c)^{\alpha(n-(k-1))} \|x-y \|^\alpha\\
&\phantom{=} +2MDC_n^\alpha c^\alpha \sum_{k=n+1}^{+\infty}  e^{-\lambda (k-n)} \left(\frac{e^{\rho+\varepsilon}}{1-ce^{\rho+\varepsilon}}\right)^{\alpha((k-1)-n)} \|x-y \|^\alpha\\
&= C_n^\alpha L\|x-y\|^\alpha,
\end{split}
\end{displaymath} 
where 
\begin{displaymath}
L=2MD c^\alpha\left( \sum_{k=-\infty}^{n}  e^{-\lambda (n-k)} (e^{\rho+\varepsilon} +c)^{\alpha(n-k+1)} +\sum_{k=n+1}^{+\infty}  e^{-\lambda (k-n)} \left(\frac{e^{\rho+\varepsilon}}{1-ce^{\rho+\varepsilon}}\right)^{\alpha (k-1-n)} \right).
\end{displaymath} 
Since $\alpha (\rho+\varepsilon) <\lambda$,  we have that  $L<+\infty$  provided that $c$ is sufficiently small. 
 Consequently, considering $L_n:=C_n^\alpha L$, we have that 
\begin{displaymath}
\begin{split}
\|H^{-1}_n(x)-H^{-1}_n(y)\|&= \|x+\overline{h}_n(x)-y-\overline{h}_n(y)\|\\
&\leq \|x-y\|+\|h_n(x)-h_n(y)\|\\
&\leq \left( \|x-y\|^{1-\alpha}+L_n\right)\|x-y\|^\alpha,
\end{split}
\end{displaymath}
for any $x,y\in X$ which implies that $H_n^{-1}$ is $\alpha$-H\"older continuous when restricted to any bounded subset of $X$ as claimed.

We now prove that $H_n$ is $\alpha$-H\"older continuous (when restricted to any bounded subset of $X$). Let $(\mathcal{Y},\|\cdot\|_\mathcal{Y})$ and  $\mathcal{T}:\mathcal{Y}\to \mathcal{Y}$ be as  in the proof of Theorem \ref{theo: GH}. Given $K>1$, we denote by $\mathcal{Y}_{\alpha,K}$ the subset of 
$\mathcal Y$ which consists of all $\mathbf{h}=(h_n)_{n\in \Z}\in \mathcal{Y}$ such that for every $x,y\in X$ and $n\in \Z$,
\begin{equation}\label{eq: lips}
\|h_n(x)-h_n(y)\|\leq K\|x-y\|_n^\alpha.
\end{equation}

We claim now that $\mathcal{T}(\mathcal{Y}_{\alpha,K})\subset \mathcal{Y}_{\alpha,K}$ whenever $c$ is sufficiently small. Indeed, given $\mathbf{h}=(h_n)_{n\in \Z}\in \mathcal{Y}_{\alpha,K}$, $n\in \Z$ and $x,y\in X$, we have that 
\[
\begin{split}
&\|(\mathcal T\mathbf h)_n (x) -(\mathcal T\mathbf h)_n (y)\| \\
&=\bigg{\|} \sum_{k=-\infty}^n \cA(n, k)P_k T^n_{k-1}-\sum_{k=n+1}^\infty \cA(n, k)(\Id-P_k)T^n_{k-1} \bigg{\|},
\end{split}
\]
where 
\begin{displaymath}
\begin{split}
T_{k-1}^n&=f_{k-1}(\cA(k-1, n)x+h_{k-1}(\cA(k-1, n)x)))\\
& \phantom{=} -f_{k-1}(\cA(k-1, n)y+h_{k-1}(\cA(k-1, n)y))).
\end{split}
\end{displaymath}
Hence, it follows from~\eqref{e11} and \eqref{e22} that 
\begin{equation}\label{eq: aux 1}
\|(\mathcal T\mathbf h)_n (x) -(\mathcal T\mathbf h)_n (y)\| \le \sum_{k=-\infty}^n D e^{-\lambda (n-k)} \|T^n_{k-1}\|+\sum_{k=n+1}^\infty D e^{-\lambda (k-n)} \|T^n_{k-1} \|,
\end{equation}
for $x, y\in X$ and $n\in \Z$. 
Next, we want to  estimate $\|T_{k-1}^n\|$ for $k\leq n$. Assume first that $\|\mathcal{A}(k-1,n)(x-y)\|_{k-1}\geq 1$. Then, using \eqref{eq: f is Holder}, \eqref{eq: comp norms} and \eqref{eq: upper bound adapted norm} we obtain that
\begin{displaymath}
\begin{split}
\|T_{k-1}^n\|&\leq 2Mc^\alpha \left(\|(\cA(k-1, n)(x-y)\|+\|h_{k-1}(\cA(k-1, n)x)))-h_{k-1}(\cA(k-1, n)y))\|\right)^\alpha \\
&\leq 2Mc^\alpha \left(\|(\cA(k-1, n)(x-y)\|_{k-1}+K\|\cA(k-1, n)(x-y)\|_{k-1}^\alpha\right)^\alpha \\
&\leq 2Mc^\alpha \left(e^{(\rho+\varepsilon) (n-k+1)}\|x-y\|_n+K\|\cA(k-1, n)(x-y)\|_{k-1}\right)^\alpha \\
&\leq 2Mc^\alpha \left(e^{(\rho+\varepsilon) (n-k+1)}\|x-y\|_n+Ke^{(\rho+\varepsilon) (n-k+1)}\|x-y\|_n\right)^\alpha\\
&\leq 4MKc^\alpha e^{\alpha(\rho+\varepsilon) (n-k+1)}\|x-y\|_n^\alpha.
\end{split}
\end{displaymath}
On the other hand, if $\|\mathcal{A}(k-1,n)(x-y)\|_{k-1}< 1$ then, using~\eqref{f}, \eqref{eq: comp norms} and \eqref{eq: upper bound adapted norm} we have that
\begin{displaymath}
\begin{split}
\|T_{k-1}^n\|&\leq c \left(\|(\cA(k-1, n)(x-y)\|+\|h_{k-1}(\cA(k-1, n)x)))-h_{k-1}(\cA(k-1, n)y))\|\right) \\
&\leq c\left(\|(\cA(k-1, n)(x-y)\|_{k-1}+K\|\cA(k-1, n)(x-y)\|_{k-1}^\alpha\right)\\
&\leq c\left(\|(\cA(k-1, n)(x-y)\|_{k-1}^\alpha+K\|\cA(k-1, n)(x-y)\|_{k-1}^\alpha\right)\\
&\leq c \left( e^{\alpha(\rho+\varepsilon) (n-k+1)}\|x-y\|_n^\alpha+Ke^{\alpha(\rho+\varepsilon) (n-k+1)}\|x-y\|_n^\alpha\right)\\
&\leq 2Kc e^{\alpha(\rho+\varepsilon) (n-k+1)}\|x-y\|_n^\alpha\\
&\leq 4MKc e^{\alpha(\rho+\varepsilon) (n-k+1)}\|x-y\|_n^\alpha.
\end{split}
\end{displaymath}
Thus, assuming $c\in (0,1]$, we have that
\begin{displaymath}
\|T_{k-1}^n\|\leq 4MKc^\alpha e^{\alpha(\rho+\varepsilon) (n-k+1)}\|x-y\|_n^\alpha
\end{displaymath}
for any $k\leq n$ and $x, y\in X$. Similarly, for $k>n$ and $x, y\in X$,  we have that 
\begin{displaymath}
\|T_{k-1}^n\|\leq 4MKc^\alpha e^{\alpha(\rho+\varepsilon) (k-n-1)}\|x-y\|_n^\alpha.
\end{displaymath}
 By plugging the last two inequalities into \eqref{eq: aux 1} and recalling that $\alpha(\rho+\varepsilon)<\lambda$, we obtain that
\begin{displaymath}
\begin{split}
\|(\mathcal T\mathbf h)_n (x) -(\mathcal T\mathbf h)_n (x)\|&\leq 4MKDc^\alpha e^{\alpha (\rho+\varepsilon)}\sum_{k=-\infty}^{+\infty} e^{(-\lambda+\alpha (\rho+\varepsilon)) |k|}\|x-y\|_n^\alpha \\
&\leq 4MKDc^\alpha e^{\alpha (\rho+\varepsilon)} \frac{1+e^{-\lambda+\alpha (\rho+\varepsilon)}}{1-e^{-\lambda+\alpha(\rho+\varepsilon)}}\|x-y\|_n^\alpha.
\end{split}
\end{displaymath}
Thus, taking $c>0$ sufficiently small so that \[4MDc^\alpha e^{\alpha (\rho+\varepsilon)} \frac{1+e^{-\lambda+\alpha (\rho+\varepsilon)}}{1-e^{-\lambda+\alpha (\rho+\varepsilon)}}\le 1, \] it follows that $\mathcal{T}(\mathcal{Y}_{\alpha,K})\subset \mathcal{Y}_{\alpha,K}$ as claimed. Therefore, observing that $\mathcal{Y}_{\alpha,K}$ is a closed subset of $(\mathcal{Y},\|\cdot\|_\mathcal{Y})$ and recalling that $\mathcal{T}:\mathcal{Y}\to \mathcal{Y}$ is a contraction, we have that the unique fixed point $\mathbf{h}=(h_n)_{n\in \Z}$ of $\mathcal{T}$ satisfies $\mathbf{h}\in \mathcal{Y}_{\alpha,K}$. Thus, since the conjugacy given by Theorem \ref{theo: GH} is of the form $H_n=\text{Id}+h_n$ and using \eqref{eq: comp norms} it follows that 
\begin{displaymath}
\begin{split}
\|H_n(x)-H_n(y)\|&= \|x+h_n(x)-y-h_n(y)\|\\
&\leq \|x-y\|+\|h_n(x)-h_n(y)\|\\
&\leq \left( \|x-y\|^{1-\alpha}+K\right)\|x-y\|_n^\alpha\\
&\leq C_n^\alpha\left( \|x-y\|^{1-\alpha}+K\right)\|x-y\|^\alpha
\end{split}
\end{displaymath}
for any $x,y\in X$, which implies that $H_n$ is $\alpha$-H\"older continuous when restricted to any bounded subset of $X$. The proof of the theorem is completed. 
\end{proof}

\begin{remark}\label{rem: bounded case}
We observe that in the particular case when \[ \sup_{n\in \Z} \max \{\lVert A_n\rVert, \lVert A_n^{-1}\rVert \}<\infty, \] Theorems \ref{theo: GH} and \ref{theo: Holder} can be deduced from the results in \cite{B}. Indeed, let $X_\infty\subset X^\Z$ be given by \[ X_\infty=\bigg \{(x_n)_{n\in \Z}\in X^\Z; \ \sup_{n\in \Z}\|x_n\|<+\infty \bigg \}.\] It is easy to see that $X_\infty$ is a Banach space when endowed with the norm $\|(x_n)_{n\in \Z}\|_\infty
=\sup_{n\in \Z}\|x_n\|$. Consider $T:X_\infty \to X_\infty $ given by 
\begin{displaymath}
T((x_n)_{n\in \Z})=((A_{n-1}x_{n-1})_{n\in \Z}).
\end{displaymath}
It is easy to check that $T$ is a well defined, bounded and invertible operator. Moreover, one can easily verify that $T$ is a generalized hyperbolic operator  (see  Example \ref{ex: ghyp op}).  In addition, we can define $f\colon X_\infty \to X_\infty$ by
\[
f((x_n)_{n\in \Z})=(f_{n-1}(x_{n-1}))_{n\in \Z}.
\]
By~\eqref{b}, we have that $f$ is well-defined. Moreover,  \eqref{f} implies that $f$ is Lipschitz with the  Lipschitz constant bounded by $c$. By~\cite[Theorem 1]{B}, for $c$ sufficiently small, there exists  a homeomorphism $H\colon Y_\infty \to Y_\infty$ such that 
$H\circ T=(T+f)\circ H$ and 
\[
\sup \{ \| H((x_n)_{n\in \Z})- (x_n)_{n\in \Z} \|_\infty: (x_n)_{n\in \Z} \in Y_\infty \} <+\infty. 
\]
It is not difficult to construct conjugacies $H_n$ as in the statement of Theorem~\ref{theo: GH} directly from $H$. The details are left to the reader. We refer to~\cite[Section 3]{BDV} for a similar approach in a different setting

\end{remark}

\section{Examples}\label{sec: examples}
In this section we present several examples of nonautonomous systems admitting a generalized exponential dichotomy focusing  on those that don't admit an  exponential dichotomy.  We stress that Examples \ref{ex: ghyp op}, \ref{ex: ex2} and \ref{ex: ex3} are of a more abstract nature while Example \ref{ex: unbounded} is somewhat more concrete.

\begin{example}[Generalized Hyperbolic operators] \label{ex: ghyp op}
Let $T\in \mathcal B(X)$ be an invertible map. We say that $T$ is a \emph{generalized hyperbolic operator} if there are closed subspaces $E^s$ and $E^u$ satisfying $X= E^s\oplus E^u$ such that 
\begin{displaymath}
T(E^s)\subset E^s \text{ and } T^{-1}(E^u)\subset E^u
\end{displaymath}
and, moreover, $T_{|E^s}$ and $T^{-1}_{|E^u}$ are uniform contractions. These last two requirements are equivalent to
\begin{equation*}
\sigma(T_{|E^s})\subset \mathbb{D} \text{ and }  \sigma(T^{-1}_{|E^u})\subset \mathbb{D},
\end{equation*}
where $\mathbb{D}=\{z\in \mathbb C: \| z\| <1\}$. Obviously, given a generalized hyperbolic operator $T$, the sequence $(A_n)_{n\in \Z}$ given by $A_n=T$, $n\in \Z$ admits a generalized exponential dichotomy. Furthermore, if $T$ is not hyperbolic, $(A_n)_{n\in \Z}$ doesn't admit an exponential dichotomy.

We now present some explicits examples of generalized hyperbolic operators that are not necessarily hyperbolic. We are going to use these examples in the constructions below.

\emph{Weighted shifts.} Let $X=l_p(\Z)$ for $1\leq p<+\infty$ or $X=c_0(\Z)$ and let $\omega=(\omega_n)_{n\in \Z}$ be a bounded sequence of numbers satisfying $\inf_{n\in \Z}|\omega_n|>0$. We consider the bilateral weighted left shift $S_\omega:X\to X$ given by
\begin{displaymath}
S_\omega((x_n)_{n\in \Z})=(\omega_{n+1}x_{n+1})_{n\in \Z}, \quad (x_n)_{n\in \Z} \in X.
\end{displaymath} 
Observe that boundedness of $\omega$ is a necessary and sufficient condition for $S_\omega$ to be a well-defined operator in $X$ while condition $\inf_{n\in \Z}|\omega_n|>0$ implies that $S_\omega$ is invertible. Suppose moreover that
\begin{displaymath}
\limsup_{n\to \infty}\sup_{k\in \N}|\omega_{-k}\omega_{-k-1}\cdot\ldots\cdot\omega_{-k-n}|^{\frac{1}{n}}<1 \text{ and } \liminf_{n \to \infty} \inf_{k\in \N} |\omega_{k}\omega_{k+1}\cdot\ldots\cdot\omega_{k+n}|^{\frac{1}{n}}>1.
\end{displaymath}
Thus, considering
\begin{displaymath}
E^s=\{(x_n)_{n\in \Z} \in X; x_n=0 \text{ for every } n>0 \}
\end{displaymath}
and
\begin{displaymath}
E^{u}=\{(x_n)_{n\in \Z}\in X; x_n=0\text{ for every } n\leq 0\},
\end{displaymath}
and using the spectral radius formula one can easily see that this is an example of generalized hyperbolic operator. Moreover, it was proved in \cite{B2} (see also \cite[Theorem B]{B}) that it is not hyperbolic. Simple examples of sequences $\omega$ satisfying the previous conditions are given, for instance, whenever
\begin{displaymath}
\lambda^{-1}< \omega_n< \sigma \text{ for } n<0 \text{ and } \sigma^{-1}< \omega_n< \lambda \text{ for } n>0
\end{displaymath}
where $\lambda<1<\sigma$.

\emph{Operators in $L^2(\mathbb{R})$.} Take $\gamma_0>0$ and let $\gamma \colon \R \to \R$ be such that $\gamma(x)>\gamma_0$ for $x\le 0$ and $\gamma(x)<-\gamma_0$ for $x>0$. For each $t_0>0$, we define a bounded linear operator $T_{t_0}$ on $L^2(\R)$ by 
\begin{displaymath}
\left[T_{t_0}\psi\right](x)=\lambda_{t_0}(x)\psi(x-t_0) \quad \text{for $\psi \in L^2(\R)$,}
\end{displaymath}
where 
\begin{displaymath}
\lambda_{t_0}(x)= e^{\int_0^{t_0} \gamma(x-s) ds}.
\end{displaymath}
 Observing that $L^2(\mathbb{R})=E^s\oplus E^u$, where $E^s=\{\psi \in L^2(\R): \psi(x)=0 \text{ for }  x<0\}$ and $E^u=\{\psi \in L^2(\R): \psi(x)=0  \text{ for } x>0\}$, it follows that $T_{t_0}$ for $t_0>0$ is a generalized hyperbolic operator. This example is taken from~\cite[Section 3]{C} where the reader can also find several other examples.
\end{example}

We now use these classes of operators to construct examples of nonautonomous dynamics that admits a generalized exponential dichotomy. 

\begin{example} \label{ex: ex2}
Let $\{T_1,T_2,\ldots,T_k\}$ be a finite family of generalized hyperbolic operators acting on $X$. In particular, for every $i=1,\ldots,k$ there are constants $D_i>0$ and $\lambda_i>0$ and a decomposition $X=E_{T_i}^s\oplus E^u_{T_i}$ into closed subspaces so that
\begin{displaymath}
\|T^n_ix\|\leq D_ie^{-\lambda_i n}\|x\| \text{ for every } x\in E^s_{T_i} \text{ and } n\geq 0
\end{displaymath}
and
\begin{displaymath}
\|T^n_ix\|\leq D_ie^{-\lambda_i |n|}\|x\| \text{ for every } x\in E^u_{T_i} \text{ and } n\leq 0.
\end{displaymath}
Suppose, moreover, that
\begin{displaymath}
E^s_{T_i}=E^s_{T_j} \text{ and } E^u_{T_i}=E^u_{T_j}
\end{displaymath} 
for every $i,j\in \{1,2,\ldots,k\}$ and $D_i=1$ for every $i\in \{1,2,\ldots,k\}$. Let us now consider any sequence $(A_n)_{n\in \Z}$ of operators such that $A_n\in \{T_1,T_2,\ldots,T_k\}$ for every $n\in \Z$. Then, it is easy to see that $(A_n)_{n\in \Z}$ admits a generalized exponential dichotomy with $S(n)=E^s_{T_i}$ and $U(n)=E^u_{T_i}$ for every $n\in \Z$, $D=1$ and $\lambda=\min\{\lambda_1,\ldots,\lambda_k\}>0$. Moreover, if $A_n\neq A_m$ for some $n,m\in \Z$ then the system is actually nonautonomous and, furthermore, whenever some of the $A_n$'s is not hyperbolic the sequence does not admit an exponential dichotomy. Examples of families of operators satisfying these hypothesis are the weighted shifts and the operators in $L^2(\R)$ presented above.
\end{example}

\begin{example}\label{ex: ex3}
As in the previous example, let $\{T_1,T_2,\ldots,T_k\}$ be a finite family of generalized hyperbolic operators acting on $X$ and satisfying $E^s_{T_i}=E^s_{T_j}$ and $E^u_{T_i}=E^u_{T_j}$ for every $i,j\in \{1,2,\ldots,k\}$. Denote these common subspaces by $E^s$ and $E^u$, respectively, consider $\tilde{\lambda}=\min \{\lambda_1,\ldots,\lambda_k\}>0$ and assume $D_i=1$ for every $i$ as before. Let $U\in \mathcal{B}(X)$ be an invertible operator satisfying
\begin{displaymath}
U(E^s)\subset E^s, U^{-1}(E^u)\subset E^u \text{ and } U(E^u)\cap E^s\neq \emptyset
\end{displaymath}
so that 
\begin{equation} \label{eq: norm U example}
\|U\|<e^{\tilde{\lambda}} \text{ and } \|U^{-1}\|<e^{\tilde{\lambda}}.
\end{equation}
Let $(A_n)_{n\in \Z}$ be any sequence of operators with $A_n\in \{U, T_1,T_2,\ldots,T_k\}$ for every $n\in \Z$ so that the operator $U$ never appear in pairs, that is, if $A_n=U$ then $A_{n+1}\neq U$ and $A_{n-1}\neq U$. Thus, observing that for every $x\in E^s$ and $i\in \{1,2,\ldots k\}$,
\begin{displaymath}
\|UT_ix\|\leq \|U\|\|T_ix\|\leq \|U\|e^{-\tilde{\lambda}}\|x\| 
\end{displaymath}
and
\begin{displaymath}
\|T_iUx\|\leq e^{-\tilde{\lambda}}\|Ux\|\leq e^{-\tilde{\lambda}}\|U\|\|x\|,
\end{displaymath}
since $U(E^s)\subset E^s$, and similarly for $\|U^{-1}T_i^{-1}x\|$ and $\|T_i^{-1}U^{-1}x\|$ for every $x\in E^u$, it follows that $(A_n)_{n\in \Z}$ admits a generalized exponential dichotomy with $S(n)=E^s$ and $U(n)=E^u$ for every $n\in \Z$, $D=1$ and $\lambda=\min\{\tilde{\lambda} -\log \|U\|, \tilde{\lambda} - \log \|U^{-1}\|\}>0$. Observe that if $A_n=U$ for some $n\in \Z$ then the sequence does not admit an exponential dichotomy even when all operators $T_i$ are hyperbolic. Moreover, as before, this construction, in general, gives rise to nonautonomous systems. Furthermore, this construction can be obviously generalized: instead of taking just one operator $U$ as above one can take several; we can allow the $U$'s to appear in pairs, triples and so on by adding some more restrictive hypothesis on its norm; the assumption that $D_i=1$ for every $i\in \{1,2,\ldots,k\}$ can be removed by changing hypothesis \eqref{eq: norm U example} by $\|U_{|E^s}\|\|T_{i|E^s}\|<1$ and $\|(U^{-1})_{|E^u}\|\|(T^{-1}_{i})_{|E^u}\|<1$ for every $i\in \{1,2,\ldots,k\}$.
\end{example}

Our next example shows that the  concept of a generalized exponential dichotomy is rather flexible. Moreover, it gives us an example to which our results apply that does not fit in the particular settings discussed in Remarks \ref{rem: A+f homeo} and \ref{rem: bounded case}.
\begin{example} \label{ex: unbounded}
Let $X=l_p(\Z)$ for $1\leq p\leq +\infty$ or $X=c_0(\Z)$. Take $\lambda >0$ and consider a sequence $(\lambda_n)_{n\in\Z}$ with the property that  $\lambda_n \ge e^\lambda$ for every $n\in \Z$. For each $n\in \mathbb{N}$, let $A_n:X\to X$ be given by
\begin{displaymath}
A_n((x_k)_{k\in \Z})=(y_k)_{k\in \Z}, 
\end{displaymath}
where
\[
y_k=\begin{cases}
\lambda_n^{-1} x_k & \text{for $|k|\leq n$;} \\
\lambda_n x_k& \text{for $|k|> n$.}
\end{cases}
\]
Moreover, for $n>0$  let $A_{-n}:X\to X$ be given by
\begin{displaymath}
A_{-n}((x_k)_{k\in \Z})=(\lambda_{-n} x_k)_{k\in \Z}.
\end{displaymath}
It is easy to see that each $A_n$ is invertible. Moreover, \[ \max \{\|A_n\|,\|A_n^{-1}\|\}=\lambda_n \quad \text{for every $n\in \Z$.} \] In particular, if we choose $\lambda_n$, $n\in \Z$, so that $\sup_{n\in \Z}\{\lambda_n\}=+\infty$, we have that 
 \[ \sup_{n\in \Z} \max \{\|A_n\|,\|A_n^{-1}\|=+\infty.\]  For $n\in \N$, set 
\begin{displaymath}
S(n)=\{(x_k)_{k\in \Z} \in X: x_k=0 \text{ for every } |k|>n \}
\end{displaymath}
and
\begin{displaymath}
U(n)=\{(x_k)_{k\in \Z}\in X: x_k=0\text{ for every } |k|\leq n\}.
\end{displaymath}
Furthermore, for $n>0$ let 
\begin{displaymath}
S(-n)=\{(0)_{k\in \Z} \in X \} \text{ and } U(-n)=X.
\end{displaymath}
Clearly, 
\[
S(n)\oplus U(n)=X, \quad \text{for $n\in \Z$.}
\]
Moreover,  $S(n)\subset S(n+1)$ and $U(n+1)\subset  U(n)$ for every $n\in \mathbb{Z}$. Thus, since $A_n(S(n))=S(n)$ and $A_n^{-1}(U(n+1))=U(n+1)$ for every $n\in \Z$, it follows that $A_n(S(n))\subset S(n+1)$ and $A_n^{-1}(U(n+1))\subset U(n)$ for every $n\in \Z$. Finally, we observe that  for every $n\in \Z$ and $x\in S(n)$ we have that $\|A_nx\|=\lambda_n^{-1}\|x\|\leq e^{-\lambda}\|x\|$,  while for $x\in U(n)$ we have that $\|A_ n^{-1}x\|=\lambda_n^{-1}\|x\|\leq e^{-\lambda}\|x\|$. We conclude that the sequence $(A_n)_{n\in \Z}$ admits a generalized exponential dichotomy. Moreover, it is easy to see that $(A_n)_{n\in \Z}$ doesn't admit an exponential dichotomy. 

Let $c\in (0,1)$ be given by Theorem \ref{theo: GH} associated to $(A_n)_{n\in \Z}$ and consider a sequence of maps $f_n:X\to X$ such that $\sup_{n\in \Z}\|f_n\|_\infty<+\infty$ and
\begin{equation*}
\|f_n(x)-f_n(y)\|\leq c_n\|x-y\| \text{ for every } x,y\in X, 
\end{equation*}
with $c_n\leq c\lambda_n^{-1}$ for every $n\in \Z$. It is easy to see that $F_n=A_n+f_n$ is a homeomorphism for every $n\in \Z$. Indeed, for $x, y\in X$ we have that 
\begin{displaymath}
\begin{split}
\|A_nx+f_n(x)-A_ny-f_n(y)\|&\geq  \|A_nx-A_ny\|-\|f_n(x)-f_n(y)\|\\
&\geq \lambda_n^{-1}\|x-y\|-c_n\|x-y\|\\
&\geq\lambda_n^{-1}(1-c)\|x-y\|,
\end{split}
\end{displaymath}
which proves that  $F_n$ is injective. In order to prove that $F_n$ is surjective, take $y\in X$ and consider $H:X\to X$ given by $H(x)=A_n^{-1}y-A_n^{-1}f_n(x)$, $x\in X$. Hence, 
\begin{displaymath}
\begin{split}
\|H(x)-H(z)\|&=\|A_n^{-1}y-A_n^{-1}f_n(x)-A_n^{-1}y+A_n^{-1}f_n(z)\|\\
&=\|A_n^{-1}f_n(x)-A_n^{-1}f_n(z)\|\\
&\leq \lambda_n \|f_n(x)-f_n(z)\|\\
&\leq \lambda_n c_n\|x-z\|\\
&\leq c\|x-z\|,
\end{split}
\end{displaymath}
for every $x,z\in X$. Since $c<1$, $H$ is a contraction. Therefore, there exists $x\in X$ such that $H(x)=x$. Consequently, $x=A_n^{-1}y-A_n^{-1}f_n(x)$ which implies that $A_nx+f_n(x)=y$. We conclude that  $F_n$ is surjective. The fact that $F_n$ is continuous is obvious while the fact that $F_n^{-1}$ is continuous (actually Lipschitz) follows from an argument similar to the one used to show inequality \eqref{c2}. Thus, $A_n+f_n$ are homeomorphisms for every $n\in \Z$ and we may apply Theorem \ref{theo: GH} to it. 
 Finally, one can easily choose sequences $(\lambda_n)_{n\in \Z}$ satisfying $\sup_{n\in \Z}\lambda_n=+\infty$ for which we still can apply Theorem \ref{theo: Holder}. For instance, given $n\in \Z$, let $\lambda_n$ be such that
\begin{displaymath}
\lambda_n=\begin{cases}
|n| & \text{if $|n|=10^k$ for some $k\in \mathbb{N}\setminus \{0\}$;}\\
$e$ & \text{otherwise.} \\
\end{cases}
\end{displaymath}
Then, it is easy to see that \eqref{eq: upper grow} is satisfied with $C_{m,n}=|m|+|n|+1$ and $\rho =1$ and, consequently, Theorem \ref{theo: Holder} may be applied to $(A_n+f_n)_{n\in \Z}$.
\end{example}

%%%%%%%%%%%%%%%%%%%%%%%%%%%%%%%%%%%%%%%%%%%%%%%%%%%%%%%%%%%%%%%%%%%%%%%%

\medskip{\bf Acknowledgements.}
We would like to thank to the referee for several suggestions that helped us to improve the quality of our paper. Moreover, we thank him/her for pointing out to us Remark \ref{rem: bounded case}. We would also like to express our gratitude to Ken Palmer who read the paper and noticed a gap in the first version. 
 L.B. was partially supported by a CNPq-Brazil PQ fellowship under Grant No. 306484/2018-8. D. D. was supported in part by Croatian Science Foundation under the project
IP-2019-04-1239 and by the University of Rijeka under the projects uniri-prirod-18-9
and uniri-prprirod-19-16.

%%%%%%%%%%%%%%%%%%%%%%%%%%%%%%%%%%%%%%%%%%%%%%%%%%%%%%%%%%%%%%%%%%%%

\bibliographystyle{abbrv}

\end{document}